\title{The Linear Slicing Method for Equal Sums of Like Powers: Modular and Geometric Constraints}
\author{Valery Asiryan\\[3pt]
\small \texttt{asiryanvalery@gmail.com}}
\date{\small December 6, 2025}
\theoremstyle{plain}
\newtheorem{theorem}{Theorem}[section]
\newtheorem{lemma}[theorem]{Lemma}
\newtheorem{corollary}[theorem]{Corollary}
\newtheorem{proposition}[theorem]{Proposition}
\theoremstyle{remark}
\newtheorem{remark}[theorem]{Remark}
\begin{document}

\maketitle

\begin{abstract}
We study the classical Diophantine equation
\[
a^k + b^k = c^k + d^k,
\]
with non-negative integer variables $a,b,c,d\in\mathbb{Z}_{\ge0}$ and real exponent $k>1$, under the additional linear constraint
\[
(c+d) - (a+b) = h, \qquad h \in \mathbb{Z}.
\]
We view this as a ``linear slicing'' of the surface of equal sums of two $k$th powers by the planes
$a+b=S$ and $c+d=S+h$, and we analyze the geometry and arithmetic of these slices.

On the central slice $h=0$, we show, using strict convexity, that there are
no non-trivial solutions even over the reals: if $a,b,c,d\ge0$ and $a+b=c+d$, then
$a^k+b^k=c^k+d^k$ forces $\{a,b\}=\{c,d\}$.  For integer $k\ge2$ we also prove a quantitative
separation theorem on the hyperplane $a+b=c+d$, showing that distinct unordered pairs with the same sum
$S$ produce values of $a^k+b^k$ that are separated by $\gg_k S^{k-2}$.

Our main new input for shifted slices $h\neq0$ is a \emph{modular divisibility obstruction (MDO)}:
for every integer $k\ge2$ define
\[
\mathcal{P}_k := \{\,p\ \text{prime} : (p-1)\mid (k-1)\,\},\qquad
M_k := \prod_{p\in\mathcal{P}_k} p.
\]
Then any integer solution with shift $h$ must satisfy $M_k\mid h$; this modulus $M_k$ is the
\emph{maximal squarefree} modulus for which $x^k\equiv x\pmod{M_k}$ holds for all integers $x$.
As immediate corollaries, if $k$ is prime then $k\mid h$, and for fixed $k$ the necessary condition
$M_k\mid h$ leaves only a $1/M_k$ fraction of shifts.
In particular, this creates a divisibility filter that eliminates, for instance, $99.96\%$ of all linear slices for $k=13$ (since $M_{13}=2730$).
Combining MDO with our convexity analysis yields an exclusion zone principle. While this implies a \emph{combined bound} on slice size $\min\{S,S+h\} \ge 2|h|/(k-1)$, we show that a global overlap argument provides a strictly stronger constraint.
Finally, along any fixed slice $(S,h)$ we prove an \emph{asymptotic dominance} bound
$k\le \max\{S,S+h\}\log 2$, beyond which no integer solutions can occur.

We also formulate the central-slice uniqueness and separation results for arbitrary
strictly convex functions, and we discuss how these structural restrictions fit with probabilistic
spacing heuristics and with the Bombieri--Lang philosophy.

\medskip
\noindent\textbf{Keywords:} Diophantine equations, equal sums of like powers, linear slicing method, modular divisibility obstruction, strict convexity, exclusion zone.

\smallskip
\noindent\textbf{MSC (2020):} 11D41 (Primary); 11A07; 26A51; 11J25.

\end{abstract}

\section{Introduction}

The Diophantine equation
\begin{equation}\label{eq:base}
a^k + b^k = c^k + d^k, \qquad a,b,c,d \in \mathbb{Z}_{\ge0},
\end{equation}
has a long history.  For $k=2$ there are classical parametrizations of all integer solutions.
For $k=3$ and $k=4$ there exist non-trivial parametrizations as well, leading to famous examples such as
\[
1729 = 1^3 + 12^3 = 9^3 + 10^3
\]
and
\[
635318657 = 59^4 + 158^4 = 133^4 + 134^4,
\]
which are the first taxicab numbers of orders $3$ and $4$, respectively; see for instance the survey
of Lander--Parkin--Selfridge~\cite{LanderParkinSelfridge1967}.

For $k \ge 5$, however, the situation is dramatically different.
As of December~2025, no non-trivial integer solutions to \eqref{eq:base} are known in this range, and the existence of such solutions remains a difficult and largely open problem;
see, for example, Browning~\cite{Browning2002} and the references therein, as well as Guy's collection of unsolved problems~\cite{Guy2004}.

\medskip

In this paper we impose the linear constraint
\begin{equation}\label{eq:shift}
(c+d) - (a+b) = h,\qquad h\in\mathbb{Z},
\end{equation}
and regard~\eqref{eq:base} as a family of equations indexed by the \emph{shift parameter} $h$.
Introducing
\[
S := a+b,\qquad S+h := c+d,
\]
we compare the functions
\[
f_S(x) := x^k + (S-x)^k, \qquad f_{S+h}(y) := y^k + (S+h-y)^k,
\]
along the \emph{linear slices} $a+b=S$ and $c+d=S+h$; we refer to this viewpoint as the
\emph{linear slicing method}.

The central slice $h=0$ corresponds to the hyperplane $a+b=c+d$.  There, strict convexity immediately
forces uniqueness up to permutation, and in the integer case we obtain a quantitative separation of order
$S^{k-2}$ between distinct values of $a^k+b^k$ along $a+b=S$.

\medskip

Our main new contribution concerns shifted slices $h\neq0$:
\begin{itemize}
\item a \emph{modular divisibility obstruction (MDO)}: $M_k\mid h$ with
$M_k=\prod_{p:\,p-1\mid (k-1)}p$, the maximal squarefree modulus with $x^k\equiv x\pmod{M_k}$ for all $x$;
this contains parity as the case $p=2$ and yields strong filters for many odd $k$ (e.g.\ $k=13\Rightarrow M_k=2730$);
\item for integer $k\ge2$, an \emph{exclusion zone} of radius $\asymp \sqrt{\min\{S,S+h\}}$ around $\min\{S,S+h\}/2$ on the
smaller-sum slice for each fixed $h\neq0$;
\item an \emph{asymptotic dominance} bound along any fixed slice $(S,h)$: writing $S_0=\max\{S,S+h\}$ and
$M=\max\{a,b,c,d\}$ for a solution, one must have $k\le M\log 2\le S_0\log 2$.
\end{itemize}
For integer solutions, combining MDO with the exclusion zone gives the lower bound $\min\{S,S+h\}\ge 2M_k/(k-1)$ whenever $h\neq0$.

\medskip

\noindent\textbf{Organization of the paper.}
Section~\ref{sec:central} develops the theory of the central slice $h=0$, including qualitative
uniqueness and quantitative separation on $a+b=c+d$.
Section~\ref{sec:general-convex} formulates central-slice uniqueness for general strictly convex functions.
Section~\ref{sec:modular} proves the modular divisibility obstruction (MDO), its maximality, and corollaries
(parity as a special case, density, prime exponents), and includes a table of $M_k$ for odd $k$.
Section~\ref{sec:shifted-geometry} contains the exclusion zone principle and its combination with MDO,
yielding $\min\{S,S+h\}\ge 2M_k/(k-1)$ for $h\neq0$, together with a separate ``overlap'' bound coming from the
global ranges of $a^k+b^k$ on different slices.
Section~\ref{sec:asymptotic-dominance} establishes the asymptotic dominance bounds for fixed slices.
Finally, Section~\ref{sec:conclusion} discusses heuristics and broader context.

Throughout, $\mathbb{N}=\{0,1,2,\dots\}$ and, unless explicitly stated otherwise, we take $a,b,c,d\in\mathbb{Z}_{\ge0}$ and use $\log$ to denote the natural logarithm.

\section{The central slice $h=0$: uniqueness and quantitative separation}
\label{sec:central}

We begin with the most symmetric slice, $a+b=c+d$.

\subsection{Qualitative uniqueness on the central slice}

\begin{theorem}\label{thm:central-uniqueness}
Let $k>1$ be real and let $S\ge 0$.
Define $f_S(x) = x^k + (S-x)^k$ on $[0,S]$.
Then $f_S$ is strictly decreasing on $[0,S/2]$ and strictly increasing on $[S/2,S]$.
In particular, if $x_1,x_2\in[0,S]$ and $f_S(x_1)=f_S(x_2)$, then
$\{x_1,S-x_1\}=\{x_2,S-x_2\}$.
\end{theorem}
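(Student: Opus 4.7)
The plan is to reduce the theorem to a single-variable monotonicity statement and then close out with the obvious symmetry of $f_S$. The key observation is that for $k>1$ the map $t\mapsto t^k$ is strictly convex on $[0,\infty)$, so $f_S(x)=x^k+(S-x)^k$, being a sum of two strictly convex functions of $x$, is itself strictly convex on $[0,S]$. Equivalently, I would just differentiate:
\[
f_S'(x) \;=\; k\bigl[x^{k-1}-(S-x)^{k-1}\bigr],
\]
and use that $t\mapsto t^{k-1}$ is strictly increasing on $[0,\infty)$ (since $k-1>0$). This forces the sign of $f_S'(x)$ to agree with the sign of $x-(S-x)=2x-S$, so $f_S'<0$ on $[0,S/2)$ and $f_S'>0$ on $(S/2,S]$, which is exactly the claimed strict monotonicity on each half-interval.

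For the \emph{in particular} clause I would exploit the built-in symmetry $f_S(S-x)=f_S(x)$ together with the monotonicity just established. Suppose $x_1,x_2\in[0,S]$ with $f_S(x_1)=f_S(x_2)$. After possibly replacing each $x_i$ by $S-x_i$ (which keeps the pair $\{x_i,S-x_i\}$ unchanged and preserves the value of $f_S$), I may assume $x_1,x_2\in[0,S/2]$. Strict monotonicity of $f_S$ on $[0,S/2]$ then forces $x_1=x_2$, hence $\{x_1,S-x_1\}=\{x_2,S-x_2\}$.

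I do not expect any real obstacle here: the whole statement reduces to a short calculus/convexity observation. The only minor points to attend to are the boundary value $x=S/2$ (which is the unique minimum of $f_S$ precisely because of strict convexity, so no ambiguity in the monotone-halves description) and the degenerate case $S=0$, where the interval collapses to a point and everything is vacuous. It is worth noting in passing that the argument is sharp in its hypothesis: it nowhere uses integrality of $k$ and relies only on $k>1$, matching the generality of the statement and setting up the real-analytic framework that the rest of Section~\ref{sec:central} will quantify.
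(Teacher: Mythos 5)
Your proposal is correct and takes essentially the same route as the paper: both compute $f_S'(x)=k\bigl[x^{k-1}-(S-x)^{k-1}\bigr]$ and conclude strict monotonicity on the two half-intervals. The only cosmetic difference is that you read off the sign of $f_S'$ directly from the strict monotonicity of $t\mapsto t^{k-1}$, whereas the paper also records $f_S''>0$ and invokes strict convexity together with $f_S'(S/2)=0$; your version is marginally more economical but the substance is identical.
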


\begin{proof}
We have
$f_S'(x)=k\bigl(x^{k-1}-(S-x)^{k-1}\bigr)$ and
$f_S''(x)=k(k-1)\bigl(x^{k-2}+(S-x)^{k-2}\bigr)>0$ on $(0,S)$,
so $f_S$ is strictly convex and symmetric with $f_S'(S/2)=0$.
Hence it is strictly decreasing on $[0,S/2]$ and strictly increasing on $[S/2,S]$.
Injectivity on $[0,S/2]$ gives the conclusion.
\end{proof}

\begin{corollary}\label{cor:central-real}
If $k>1$ and $a,b,c,d\ge0$ satisfy $a^k+b^k=c^k+d^k$ and $a+b=c+d$, then $\{a,b\}=\{c,d\}$.
\end{corollary}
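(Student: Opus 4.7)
The plan is to observe that this corollary is essentially a direct translation of Theorem~\ref{thm:central-uniqueness} into the language of unordered pairs with a prescribed sum. First I would set $S := a+b = c+d$ and handle the degenerate case $S=0$ separately: in that case $a=b=c=d=0$ and the conclusion $\{a,b\}=\{c,d\}$ is immediate. So from here on assume $S>0$.

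Next, since $a,b\ge 0$ with $a+b = S$, both $a$ and $b$ lie in $[0,S]$ and $b = S-a$; likewise $c,d \in [0,S]$ with $d = S-c$. The point is that both sides of the hypothesis can be expressed as values of the single function $f_S$ from the theorem:
\[
f_S(a) \;=\; a^k + (S-a)^k \;=\; a^k + b^k \;=\; c^k + d^k \;=\; c^k + (S-c)^k \;=\; f_S(c).
\]
Applying the injectivity-up-to-reflection conclusion of Theorem~\ref{thm:central-uniqueness} with $x_1 = a$ and $x_2 = c$ then yields $\{a, S-a\} = \{c, S-c\}$, which is exactly $\{a,b\} = \{c,d\}$.

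There is no real obstacle here; the only step requiring any care is recognising that the relation $b = S-a$ (and $d = S-c$) imposed by the common-sum hypothesis is precisely what is needed to identify $a^k+b^k$ with $f_S(a)$, so that Theorem~\ref{thm:central-uniqueness} can be applied. Everything else is bookkeeping around the trivial boundary case $S=0$.
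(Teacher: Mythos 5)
Your proof is correct and matches the intended derivation: the paper states this as an immediate corollary of Theorem~\ref{thm:central-uniqueness}, and you have simply made the substitution $x_1=a$, $x_2=c$ with $S=a+b=c+d$ explicit. Your separate handling of $S=0$ is harmless but not strictly necessary, since the theorem's hypothesis already allows $S\ge 0$.
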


\begin{corollary}\label{cor:central-diophantine}
If $k>1$ and $a,b,c,d\in\mathbb{Z}_{\ge0}$ satisfy $a^k+b^k=c^k+d^k$ and $a+b=c+d$,
then $\{a,b\}=\{c,d\}$.
\end{corollary}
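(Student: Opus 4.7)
The plan is to deduce Corollary~\ref{cor:central-diophantine} as an immediate specialization of Corollary~\ref{cor:central-real}. Since $\mathbb{Z}_{\ge 0}\subset\mathbb{R}_{\ge 0}$, any integer quadruple $(a,b,c,d)$ satisfying the two hypotheses $a^k+b^k=c^k+d^k$ and $a+b=c+d$ is in particular a non-negative real quadruple satisfying them, so Corollary~\ref{cor:central-real} applies verbatim and yields $\{a,b\}=\{c,d\}$. The conclusion is a statement about unordered pairs, which is unaffected by the ambient ring: equality of unordered multisets of real numbers whose entries happen to be integers is the same as equality of unordered multisets of integers. Hence no integer-specific argument is required.

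If one prefers a self-contained derivation that bypasses Corollary~\ref{cor:central-real}, the same result follows directly from Theorem~\ref{thm:central-uniqueness}: set $S:=a+b=c+d$ and apply the theorem to $x_1:=a$ and $x_2:=c$ in $[0,S]$. Noting that $f_S(a)=a^k+(S-a)^k=a^k+b^k$ and similarly $f_S(c)=c^k+d^k$, the hypothesis gives $f_S(x_1)=f_S(x_2)$, whence $\{a,S-a\}=\{c,S-c\}$, i.e.\ $\{a,b\}=\{c,d\}$.

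There is no real obstacle here; the statement is a trivial restriction of domain. The point of recording it as a separate corollary is organizational, emphasizing that the central slice $h=0$ admits no non-trivial integer solutions, which will serve as the base case against which the MDO and exclusion-zone analyses of the shifted slices $h\neq 0$ are contrasted in the later sections.
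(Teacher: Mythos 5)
Your proposal is correct and matches the paper's intent exactly: the corollary is stated without proof precisely because it is the specialization of Corollary~\ref{cor:central-real} to integer inputs, which is in turn a direct consequence of Theorem~\ref{thm:central-uniqueness}. Both of your routes (restriction of domain from the real case, or direct application of the theorem with $x_1=a$, $x_2=c$) are the standard reading.
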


\begin{corollary}\label{cor:two-powers}
Let $k>m>0$ be real and $a,b,c,d\ge0$ satisfy $a^m+b^m=c^m+d^m$ and $a^k+b^k=c^k+d^k$.
Then $\{a,b\}=\{c,d\}$.
\end{corollary}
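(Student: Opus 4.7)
The plan is to reduce directly to Corollary~\ref{cor:central-real}, exploiting the fact that the first hypothesis $a^m+b^m=c^m+d^m$ is a \emph{linear} constraint in the variables $a^m, b^m, c^m, d^m$. So first I would substitute $u:=a^m$, $v:=b^m$, $w:=c^m$, $x:=d^m$. Since $m>0$ and $a,b,c,d\ge0$, these are non-negative reals, and the map $t\mapsto t^m$ is a strictly increasing bijection on $[0,\infty)$. Under this substitution, the first identity becomes the sum equation
\[
u+v=w+x.
\]

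Next I would rewrite the second identity in the new variables. Setting $r:=k/m$, we have $a^k=(a^m)^{r}=u^{r}$ and analogously for $b,c,d$, so that $a^k+b^k=c^k+d^k$ reads
\[
u^{r}+v^{r}=w^{r}+x^{r}.
\]
The hypothesis $k>m>0$ gives $r>1$, which is precisely what is required for strict convexity of $t\mapsto t^{r}$ on $[0,\infty)$.

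At this point $(u,v,w,x)$ are non-negative reals with $u+v=w+x$ and $u^{r}+v^{r}=w^{r}+x^{r}$, $r>1$, so Corollary~\ref{cor:central-real} applies verbatim and yields $\{u,v\}=\{w,x\}$. Finally, inverting the substitution by the strictly increasing bijection $t\mapsto t^{1/m}$ on $[0,\infty)$ produces $\{a,b\}=\{c,d\}$.

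There is no real obstacle; the whole argument is a change of variables followed by an invocation of the central-slice theorem. The only conceptual point worth flagging is that the exponent in the reduced problem is $r=k/m$ rather than $k$ itself, so one must verify $r>1$ from $k>m>0$ (this is automatic) in order to have the strictly convex hypothesis of Theorem~\ref{thm:central-uniqueness} available in the new coordinates.
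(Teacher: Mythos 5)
Your proof is correct and is the natural (and almost certainly intended) reduction: the paper states this corollary without proof, and the substitution $u=a^m,\dots$ turning the hypotheses into $u+v=w+x$ and $u^r+v^r=w^r+x^r$ with $r=k/m>1$ reduces it exactly to Corollary~\ref{cor:central-real}, after which strict monotonicity of $t\mapsto t^{1/m}$ on $[0,\infty)$ recovers $\{a,b\}=\{c,d\}$.
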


\subsection{Quantitative separation on the central slice}

\begin{lemma}\label{lem:discrete-convex}
If $\varphi$ is strictly convex on an interval and $x_n=x_0+nh$ lie in that interval, then
$u_n:=\varphi(x_n)$ satisfies $u_{n+1}-u_n$ strictly increasing in $n$.
\end{lemma}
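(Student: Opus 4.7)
The plan is to reduce the statement to the midpoint form of strict convexity via a discrete second difference. Since ``$u_{n+1}-u_n$ strictly increasing in $n$'' is exactly the positivity of
\[
\Delta^2 u_n := u_{n+2} - 2u_{n+1} + u_n = \varphi(x_{n+2}) - 2\varphi(x_{n+1}) + \varphi(x_n),
\]
it suffices to show $\Delta^2 u_n > 0$ for every index $n$ for which all three points $x_n, x_{n+1}, x_{n+2}$ lie in the interval of convexity.

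The key observation is that equal spacing of the sampling points puts $x_{n+1}$ at the midpoint of its neighbours: $x_{n+1} = \tfrac{1}{2}(x_n + x_{n+2})$. After substituting this into the expression above, I would invoke the defining inequality of strict convexity with $\lambda = 1/2$, applied to the distinct points $x_n, x_{n+2}$:
\[
\varphi\!\left(\tfrac{x_n + x_{n+2}}{2}\right) < \tfrac{1}{2}\bigl(\varphi(x_n) + \varphi(x_{n+2})\bigr).
\]
This gives $\Delta^2 u_n > 0$ directly, completing the argument. Note that nothing in the argument depends on the sign of $h$, nor on any smoothness of $\varphi$ beyond the hypothesized strict convexity.

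The only place where care is needed is the implicit nondegeneracy $h \neq 0$: if $h=0$, all $x_n$ coincide, the differences $u_{n+1}-u_n$ vanish, and strict monotonicity fails. I would state this as a standing assumption at the start of the proof, since the application in the paper has $x_0, x_1, x_2$ genuinely distinct. Beyond that, no real obstacle remains; the lemma is an immediate specialization of the midpoint form of strict convexity to three equally spaced abscissae.
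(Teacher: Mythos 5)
Your argument is the same as the paper's, just phrased in second-difference notation and indexed at $n,n+1,n+2$ rather than $n-1,n,n+1$: both reduce the claim to the midpoint inequality $\varphi\bigl(\tfrac{x_{n}+x_{n+2}}{2}\bigr)<\tfrac12(\varphi(x_n)+\varphi(x_{n+2}))$. Your explicit remark that $h\neq0$ is needed for strictness is a reasonable, if minor, addition.
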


\begin{proof}
Strict convexity gives $\varphi(x_n)<\frac12(\varphi(x_{n-1})+\varphi(x_{n+1}))$,
whence $u_{n+1}-u_n>u_n-u_{n-1}$.
\end{proof}

\begin{theorem}\label{thm:quantitative}
Let $k\ge2$ be integer and let $a,b,c,d\in\mathbb{Z}_{\ge0}$ satisfy
$a+b=c+d=S\ge2$ and $\{a,b\}\neq\{c,d\}$.
Then
\[
\bigl|a^k+b^k - c^k-d^k\bigr|
\;\ge\; k(k-1)\,\bigl\lfloor S/2\bigr\rfloor^{\,k-2}
\;\ge\; C_k\, S^{k-2},\qquad C_k=k(k-1)\,3^{2-k}.
\]
\end{theorem}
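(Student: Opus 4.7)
The plan is to reduce the claim to the worst-case pair sitting at the center of the slice, and then to evaluate a single first-difference by binomial expansion. First, I would normalize by assuming $a\le b$ and $c\le d$, so that $a,c\in\{0,1,\dots,\lfloor S/2\rfloor\}$; the hypothesis $\{a,b\}\neq\{c,d\}$ forces $a\neq c$, and after possibly swapping $(a,b)\leftrightarrow(c,d)$ I may take $a<c\le\lfloor S/2\rfloor$. By Theorem~\ref{thm:central-uniqueness}, $f_S$ is strictly decreasing on $[0,S/2]$, so $|a^k+b^k-c^k-d^k|=f_S(a)-f_S(c)$.

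Next, I would telescope
\[
f_S(a)-f_S(c)=\sum_{j=a}^{c-1}\bigl[f_S(j)-f_S(j+1)\bigr].
\]
By Lemma~\ref{lem:discrete-convex}, the first differences $f_S(j+1)-f_S(j)$ are strictly increasing in $j$; on $[0,S/2]$, where these differences are negative, this means the positive quantities $f_S(j)-f_S(j+1)$ are strictly \emph{decreasing} in $j$. Every summand above (of which there is at least one) is therefore $\ge f_S(m-1)-f_S(m)$ with $m:=\lfloor S/2\rfloor$, and the problem reduces to bounding this single central first-difference from below.

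Setting $m=\lfloor S/2\rfloor$: if $S=2m$ then
\[
(m-1)^k+(m+1)^k-2m^k=2\sum_{l\ge1}\binom{k}{2l}m^{k-2l}\ge 2\binom{k}{2}m^{k-2}=k(k-1)m^{k-2}.
\]
If $S=2m+1$ the quantity is $(m-1)^k+(m+2)^k-m^k-(m+1)^k$; expanding each power in $m$, the $m^k$ and $m^{k-1}$ coefficients cancel, and the coefficient of $m^{k-j}$ for $j\ge 2$ is $\binom{k}{j}\bigl[(-1)^j+2^j-1\bigr]$, which is non-negative for all $j\ge 2$ (it equals $2^j$ when $j$ is even and $2^j-2$ when $j$ is odd). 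Retaining just the $j=2$ term yields $4\binom{k}{2}m^{k-2}=2k(k-1)m^{k-2}$. Thus, in both parities, $f_S(m-1)-f_S(m)\ge k(k-1)\lfloor S/2\rfloor^{k-2}$, which is the main assertion. The secondary inequality $\ge C_k S^{k-2}$ follows from the elementary estimate $\lfloor S/2\rfloor\ge S/3$ for $S\ge 2$.

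The only non-routine ingredient is the sign check $(-1)^j+2^j-1\ge 0$ for $j\ge 2$ in the odd-$S$ expansion; once this is in hand, the even and odd cases produce the same clean bound, and the remaining arguments reduce to Theorem~\ref{thm:central-uniqueness}, Lemma~\ref{lem:discrete-convex}, and an elementary comparison.
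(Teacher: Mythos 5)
Your proposal is correct and follows essentially the same route as the paper's proof: reduce to the representative with $a<c\le\lfloor S/2\rfloor$, invoke Lemma~\ref{lem:discrete-convex} to bound the telescoping sum of positive first-differences from below by the single central difference $f_S(m-1)-f_S(m)$ with $m=\lfloor S/2\rfloor$, and then estimate that difference by binomial expansion in the two parity cases of $S$. The only cosmetic difference is notational (you telescope $\sum_{j=a}^{c-1}[f_S(j)-f_S(j+1)]$ while the paper writes $\sum_{t=x}^{y-1}g_t$), and your sign check $(-1)^j+2^j-1\ge0$ matches the paper's coefficient analysis exactly.
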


\begin{proof}
Let $S\ge2$ and put $n:=\lfloor S/2\rfloor$.
For $t\in\{0,1,\dots,S\}$ set $F(t):=t^k+(S-t)^k$.
Since $F(t)=F(S-t)$, every unordered pair $\{u,S-u\}$ is represented uniquely by
$x:=\min\{u,S-u\}\in\{0,1,\dots,n\}$, and its value is $F(x)$.
By Theorem~\ref{thm:central-uniqueness}, $F$ is strictly decreasing on $[0,S/2]$,
hence the sequence $u_t:=F(t)$ for $t=0,1,\dots,n$ is strictly decreasing.

Apply Lemma~\ref{lem:discrete-convex} to $f_S$ on the arithmetic progression $0,1,\dots,n$.
Then $\Delta_t:=u_{t+1}-u_t$ is strictly increasing in $t$, so
$g_t:=u_t-u_{t+1}=-(\Delta_t)$ is strictly decreasing in $t$.
Therefore for any $0\le x<y\le n$ we have
\[
u_x-u_y=\sum_{t=x}^{y-1} g_t \ \ge\ g_{n-1}=u_{n-1}-u_n.
\]
Thus, for distinct unordered pairs with sum $S$,
\[
|a^k+b^k-c^k-d^k| \ge u_{n-1}-u_n.
\]

It remains to bound $u_{n-1}-u_n$ from below.

\smallskip
Case $S=2n$ (even).
Then $u_n=2n^k$ and $u_{n-1}=(n-1)^k+(n+1)^k$, hence
\[
\begin{aligned}
u_{n-1}-u_n
&=(n+1)^k+(n-1)^k-2n^k \\
&= 2\sum_{\substack{j\ \mathrm{even}\\ j\ge 2}} \binom{k}{j} n^{k-j} \\
&\ge k(k-1)n^{k-2}.
\end{aligned}
\]

\smallskip\
Case $S=2n+1$ (odd).
Then $u_n=n^k+(n+1)^k$ and $u_{n-1}=(n-1)^k+(n+2)^k$, so
\[
u_{n-1}-u_n=(n+2)^k-(n+1)^k+(n-1)^k-n^k
=\sum_{j=2}^k \binom{k}{j} n^{k-j}\bigl(2^j-1+(-1)^j\bigr).
\]
Here each coefficient $\bigl(2^j-1+(-1)^j\bigr)\ge0$ and equals $4$ when $j=2$, hence
\[
u_{n-1}-u_n \ge 4\binom{k}{2}n^{k-2}=2k(k-1)n^{k-2}\ge k(k-1)n^{k-2}.
\]

Combining the cases yields
\[
|a^k+b^k-c^k-d^k|\ge k(k-1)\,n^{k-2}=k(k-1)\,\lfloor S/2\rfloor^{\,k-2}.
\]
Finally, for $S\ge2$ one has $\lfloor S/2\rfloor\ge S/3$, hence
$\lfloor S/2\rfloor^{k-2}\ge 3^{2-k}S^{k-2}$, giving the constant $C_k$.
\end{proof}

\section{A general formulation for strictly convex functions}
\label{sec:general-convex}

\begin{theorem}\label{thm:general}
Let $I\subset\mathbb{R}$ be an interval and $\varphi:I\to\mathbb{R}$ strictly convex.
Fix $S\in\mathbb{R}$ and define $F(x)=\varphi(x)+\varphi(S-x)$ on
$D=I\cap(S-I)$.
Then $F$ is strictly decreasing on $D\cap(-\infty,S/2]$, strictly increasing on
$D\cap[S/2,\infty)$, and $F(x_1)=F(x_2)$ implies $\{x_1,S-x_1\}=\{x_2,S-x_2\}$.
\end{theorem}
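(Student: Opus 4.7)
The plan is to reduce the theorem to strict decrease of $F$ on the half-interval $D\cap(-\infty,S/2]$ and then exploit the built-in symmetry $F(x)=F(S-x)$ to extract everything else. As a preliminary, I would note that $D=I\cap(S-I)$, being an intersection of two intervals, is itself an interval, and that $x\in D$ iff $S-x\in D$, so $D$ is symmetric about $S/2$. The identity $F(S-x)=F(x)$ is immediate from the definition of $F$; given strict decrease on the left half, strict increase on $D\cap[S/2,\infty)$ then follows by symmetry, and if $F(x_1)=F(x_2)$, replacing each $x_i$ by $\min\{x_i,S-x_i\}\in D\cap(-\infty,S/2]$ and invoking injectivity there yields $\{x_1,S-x_1\}=\{x_2,S-x_2\}$.

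The core work is therefore the strict decrease on $D\cap(-\infty,S/2]$. Since $\varphi$ is only assumed strictly convex (no differentiability is given), the derivative-based argument of Theorem~\ref{thm:central-uniqueness} must be replaced by the defining convex-combination inequality. Given $x_1<x_2$ in $D\cap(-\infty,S/2]$, I would set $y_i:=S-x_i$, so that $x_1<x_2\le y_2<y_1$ (with $x_2=y_2$ only when $x_2=S/2$). The key observation is that $x_2$ and $y_2$ are \emph{complementary} convex combinations of the endpoints $x_1,y_1$: writing $\mu=(y_1-x_2)/(y_1-x_1)$, one has
\[
x_2=\mu x_1+(1-\mu)y_1,\qquad y_2=(1-\mu)x_1+\mu y_1.
\]
Strict convexity applied to each expression gives $\varphi(x_2)<\mu\varphi(x_1)+(1-\mu)\varphi(y_1)$ and $\varphi(y_2)<(1-\mu)\varphi(x_1)+\mu\varphi(y_1)$; adding these yields $F(x_2)<F(x_1)$.

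The main conceptual point is recognizing this complementary pairing, which takes the place of the monotonicity of $f_S'$ used in the smooth case; after that, only routine checks remain. One must verify $\mu\in(0,1)$: $\mu<1$ because $x_1<x_2$, and $\mu>0$ because $x_2<y_1$, which in turn follows from $x_1+x_2<2x_2\le S$. The boundary case $x_2=S/2$ (where $x_2=y_2$) deserves a brief mention but causes no trouble, since then $\mu=1/2$ and both strict convexity inequalities collapse to the single valid bound $\varphi(S/2)<\tfrac12\varphi(x_1)+\tfrac12\varphi(y_1)$.
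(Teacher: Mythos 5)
Your proof is correct, and it takes a genuinely different (and more self-contained) route than the paper's. The paper's proof is a one-liner: it observes that $F$ is strictly convex as a sum of strictly convex functions (with $\varphi(S-x)$ strictly convex as a composition with an affine bijection) and symmetric about $S/2$, and then appeals to the standard fact that a strictly convex symmetric function is strictly monotone on each side of its center, citing Karamata/majorization for background. You bypass the strict convexity of $F$ entirely and instead prove the monotonicity directly: you recognize that for $x_1 < x_2 \le S/2$ the pair $(x_2, S-x_2)$ consists of complementary convex combinations of $(x_1, S-x_1)$ with a common coefficient $\mu\in(0,1)$, apply the defining inequality of strict convexity twice, and add. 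This is precisely the two-term case of the Karamata majorization inequality that the paper merely cites, so your argument is closer in spirit to the referenced background but fully spelled out; it has the advantage of needing no auxiliary lemma about sums or symmetry of strictly convex functions, at the cost of a slightly longer write-up. Your handling of the boundary case $x_2 = S/2$ and the deduction of uniqueness via $\min\{x_i,S-x_i\}$ are both sound.
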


\begin{proof}
$F$ is strictly convex by convexity of $\varphi$ and symmetric: $F(x)=F(S-x)$.
Hence it has a unique minimum at $S/2$ and the claimed monotonicity/uniqueness follow.
For background, cf.\ Karamata/majorization~\cite{HLP1934,NiculescuPersson2006}.
\end{proof}

\section{Modular constraints on the shift: the MDO}\label{sec:modular}

We derive a universal necessary congruence for the shift $h$ depending only on $k$.

\begin{theorem}[Modular Divisibility Obstruction]\label{thm:MDO}
Let $k\ge2$ be integer, set
\[
\mathcal{P}_k=\{\,p\ \mathrm{prime}:\ (p-1)\mid(k-1)\,\},\qquad
M_k=\prod_{p\in\mathcal{P}_k}p.
\]
If $a,b,c,d\in\mathbb{Z}$ satisfy $a^k+b^k=c^k+d^k$ and $h=(c+d)-(a+b)$, then $M_k\mid h$.
\end{theorem}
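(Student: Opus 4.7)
The plan is to prove the congruence prime by prime and then assemble via the Chinese Remainder Theorem, exploiting the fact that $M_k$ is squarefree by construction.

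First I would fix a prime $p\in\mathcal{P}_k$ and establish the key pointwise identity $x^k\equiv x\pmod{p}$ for every integer $x$. The case $p\mid x$ is trivial since both sides are $\equiv0$. For $p\nmid x$, Fermat's little theorem gives $x^{p-1}\equiv 1\pmod{p}$; since the hypothesis $p\in\mathcal{P}_k$ means $(p-1)\mid(k-1)$, we may write $k-1=(p-1)m$ and obtain $x^{k-1}=(x^{p-1})^m\equiv 1\pmod{p}$, hence $x^k\equiv x\pmod{p}$. I would note in passing that this is exactly Fermat's little theorem when $p-1\mid k-1$, and that it characterizes the primes $p$ for which the map $x\mapsto x^k$ is the identity on $\mathbb{Z}/p\mathbb{Z}$ (the maximality claim itself will be handled separately later in the paper).

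Next I would reduce the hypothesis $a^k+b^k=c^k+d^k$ modulo $p$. Applying the identity just proved to each of $a,b,c,d$ yields
\[
a+b\;\equiv\;a^k+b^k\;=\;c^k+d^k\;\equiv\;c+d\pmod{p},
\]
which is precisely $p\mid h$ with $h=(c+d)-(a+b)$. Since this holds for every $p\in\mathcal{P}_k$ and the primes in $\mathcal{P}_k$ are pairwise distinct, the Chinese Remainder Theorem (or equivalently, the fact that a product of distinct primes divides an integer iff each prime does) gives $M_k=\prod_{p\in\mathcal{P}_k}p\mid h$, as required.

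There is no serious obstacle here; the proof is essentially a one-line application of Fermat's little theorem followed by CRT. The only small point worth being careful about is handling $p\mid x$ alongside $p\nmid x$ so that the congruence $x^k\equiv x\pmod{p}$ is valid universally on $\mathbb{Z}$ (not just on $(\mathbb{Z}/p\mathbb{Z})^{\times}$), since $a,b,c,d$ are allowed to be arbitrary integers. I would make that dichotomy explicit in the write-up so that the subsequent reduction modulo $p$ is unambiguous.
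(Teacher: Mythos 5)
Your proof is correct and follows essentially the same route as the paper's: prime-by-prime, establish $x^k\equiv x\pmod p$ via Fermat's little theorem (with the explicit dichotomy $p\mid x$ vs.\ $p\nmid x$), reduce the equation modulo $p$ to get $p\mid h$, and conclude $M_k\mid h$ since the primes in $\mathcal{P}_k$ are distinct. No meaningful difference from the paper's argument.
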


\begin{proof}
Fix $p\in\mathcal{P}_k$, so $k-1=m(p-1)$ for some $m\in\mathbb{N}$. For any integer $x$,
either $x\equiv0\pmod p$ (then $x^k\equiv x\equiv0$) or $x\not\equiv0\pmod p$, in which case by FLT
$x^{p-1}\equiv1\pmod p$ and $x^k=x(x^{p-1})^m\equiv x\pmod p$.
Hence $a^k+b^k\equiv a+b\pmod p$ and $c^k+d^k\equiv c+d\pmod p$,
so $a+b\equiv c+d\pmod p$ and thus $p\mid h$.
Since this holds for each $p\in\mathcal{P}_k$ and the primes are coprime, $M_k\mid h$ by CRT~\cite{IrelandRosen1990}.
\end{proof}

\begin{lemma}[Maximality on squarefree moduli]\label{lem:maximality}
Let $N$ be squarefree and suppose $x^k\equiv x\pmod N$ holds for all integers $x$.
Then $N\mid M_k$.
\end{lemma}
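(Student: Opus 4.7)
The plan is to reduce the claim about the squarefree modulus $N$ to a statement about each of its prime factors, and then extract the divisibility $(p-1)\mid(k-1)$ from the existence of a primitive root.

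First I would write $N=p_1p_2\cdots p_r$ with the $p_i$ distinct primes, and observe that since the hypothesis $x^k\equiv x\pmod N$ holds for every integer $x$, reducing modulo any prime divisor $p=p_i$ of $N$ yields $x^k\equiv x\pmod p$ for all $x\in\mathbb{Z}$. It thus suffices to prove: if $p$ is prime and $x^k\equiv x\pmod p$ for every integer $x$, then $p\in\mathcal{P}_k$, i.e.\ $(p-1)\mid(k-1)$.

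For this prime-level statement I would invoke the existence of a primitive root $g$ modulo $p$, i.e.\ an element of $(\mathbb{Z}/p\mathbb{Z})^\times$ of exact order $p-1$ (standard, see \cite{IrelandRosen1990}). Since $g\not\equiv0\pmod p$, the congruence $g^k\equiv g\pmod p$ can be divided by $g$ in $(\mathbb{Z}/p\mathbb{Z})^\times$, giving $g^{k-1}\equiv1\pmod p$. Because $g$ has order $p-1$, this forces $(p-1)\mid(k-1)$, so $p\in\mathcal{P}_k$. The case $p=2$ is automatic since $p-1=1$ divides every integer. Applying this to each $p_i$ shows $p_i\in\mathcal{P}_k$ for every $i$, and since $N$ is squarefree and the $p_i$ are distinct, $N=\prod_i p_i$ divides $\prod_{p\in\mathcal{P}_k}p=M_k$.

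There is no real obstacle: the only nontrivial ingredient is the existence of a primitive root modulo a prime, which is classical. One small point worth being explicit about is that squarefreeness is used twice, once to reduce $N$ to its prime factors via CRT and once at the end to conclude $N\mid M_k$ from the fact that each prime divisor of $N$ lies in $\mathcal{P}_k$ (without squarefreeness one could only conclude that the radical of $N$ divides $M_k$, which is exactly why $M_k$ is the \emph{maximal squarefree} such modulus rather than the maximal modulus outright).
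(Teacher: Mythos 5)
Your proof is correct and essentially identical to the paper's: both reduce to a single prime divisor $p$ of $N$ and exploit the cyclic structure of $(\mathbb{Z}/p\mathbb{Z})^\times$ of order $p-1$ to extract $(p-1)\mid(k-1)$, then conclude $N\mid M_k$ from squarefreeness. The paper phrases the key step via the group exponent while you phrase it via a primitive root, but these are the same fact.
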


\begin{proof}
If $p\mid N$, then $x^k\equiv x\pmod p$ for all $x$.
Restricting to units shows $x^{k-1}\equiv1$ for all $x\in(\mathbb{Z}/p\mathbb{Z})^\times$,
so the exponent of $(\mathbb{Z}/p\mathbb{Z})^\times$ divides $k-1$.
Since $(\mathbb{Z}/p\mathbb{Z})^\times$ is cyclic of order $p-1$, its exponent equals $p-1$.
Hence $p-1\mid k-1$, i.e.\ $p\in\mathcal{P}_k$,
and therefore $p\mid M_k$.
As $N$ is squarefree, $N\mid M_k$.
\end{proof}

\begin{corollary}[Parity as a special case]\label{cor:parity}
For every $k\ge2$, $2\in\mathcal{P}_k$ and hence $2\mid h$.
\end{corollary}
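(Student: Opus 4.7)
The plan is to observe that this corollary is an immediate instance of Theorem~\ref{thm:MDO} once one checks that $p=2$ always belongs to $\mathcal{P}_k$. So the whole task reduces to verifying a divisibility relation involving $p-1=1$.

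First I would recall the definition $\mathcal{P}_k=\{\,p\ \mathrm{prime}:(p-1)\mid(k-1)\,\}$ from Theorem~\ref{thm:MDO} and specialize to $p=2$. Since $p-1=1$ and $1$ divides every integer, the condition $(p-1)\mid(k-1)$ holds vacuously for any $k\ge 2$. Hence $2\in\mathcal{P}_k$, and consequently $2\mid M_k$.

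Then I would invoke Theorem~\ref{thm:MDO}, which guarantees $M_k\mid h$ for any integer solution of $a^k+b^k=c^k+d^k$ with shift $h=(c+d)-(a+b)$. Chaining the divisibilities $2\mid M_k\mid h$ yields $2\mid h$, as claimed.

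There is no real obstacle here; the only thing worth emphasizing, perhaps as a parenthetical remark, is the underlying arithmetic content: the congruence $x^k\equiv x\pmod 2$ is trivial because $x$ and $x^k$ have the same parity, so parity of $a+b$ and $c+d$ must agree. This is exactly the $p=2$ instance of the proof of Theorem~\ref{thm:MDO}, and it confirms that the classical parity constraint for equal sums of like powers is the simplest case of the MDO.
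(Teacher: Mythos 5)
Your proof is correct and matches the paper's (implicit) reasoning exactly: since $p-1=1$ divides every integer, $2\in\mathcal{P}_k$ for all $k\ge2$, and Theorem~\ref{thm:MDO} then gives $2\mid M_k\mid h$. The closing remark about $x^k\equiv x\pmod 2$ being the $p=2$ instance of Fermat's little theorem is exactly the intended interpretation.
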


\begin{corollary}[Prime exponents]\label{cor:prime-k}
If $k$ is prime, then $k\in\mathcal{P}_k$ and thus $k\mid h$.
\end{corollary}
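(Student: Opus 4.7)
The plan is essentially to unpack definitions and then quote Theorem~\ref{thm:MDO}, since there is almost nothing to compute here. First I would verify the membership $k\in\mathcal{P}_k$: by definition this requires $k$ to be prime (given by hypothesis) and $(k-1)\mid(k-1)$, which is tautological. So the only real content is checking that the defining condition of $\mathcal{P}_k$ does not exclude $k$ itself, and it manifestly does not.

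Once $k\in\mathcal{P}_k$ is established, the definition $M_k=\prod_{p\in\mathcal{P}_k}p$ immediately gives $k\mid M_k$. Combining this with Theorem~\ref{thm:MDO}, which asserts $M_k\mid h$ for every integer solution, yields $k\mid h$ by transitivity of divisibility.

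I do not anticipate any genuine obstacle here; the corollary is a pure specialization of the MDO to the prime $p=k$, and the only ``work'' is noting the trivial divisibility $(k-1)\mid(k-1)$. A two-sentence proof, possibly pointing to the concrete examples $k=5,7,11,13$ where this yields $5\mid h$, $7\mid h$, $11\mid h$, $13\mid h$ respectively, should suffice.
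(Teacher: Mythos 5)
Your proof is correct and is exactly the intended argument: $k$ prime together with the tautology $(k-1)\mid(k-1)$ places $k$ in $\mathcal{P}_k$, whence $k\mid M_k\mid h$ by Theorem~\ref{thm:MDO}. The paper leaves this corollary without a written proof precisely because the argument is this short.
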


\begin{corollary}[Density of admissible shifts]\label{cor:density}
Among all integers $h$, the necessary condition $M_k\mid h$ selects a subset of asymptotic density $1/M_k$.
\end{corollary}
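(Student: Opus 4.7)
The plan is to observe that the statement is a direct density calculation for the arithmetic progression $M_k\mathbb{Z}\subset\mathbb{Z}$, entirely independent of the Diophantine context; the structural content has already been absorbed in Theorem~\ref{thm:MDO}.

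First I would fix the convention for asymptotic density that the paper uses implicitly: for $A\subseteq\mathbb{Z}$,
\[
d(A)=\lim_{N\to\infty}\frac{|A\cap[-N,N]|}{2N+1},
\]
when this limit exists. I would then take $A=\{h\in\mathbb{Z}:M_k\mid h\}=M_k\mathbb{Z}$, and count
\[
|A\cap[-N,N]|=|\{j\in\mathbb{Z}:|jM_k|\le N\}|=2\lfloor N/M_k\rfloor+1.
\]
Dividing by $2N+1$ and letting $N\to\infty$ gives the limit $1/M_k$.

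If the reader prefers the one-sided convention on positive integers, the same argument with $[1,N]$ yields $\lfloor N/M_k\rfloor/N\to 1/M_k$, so the statement is independent of the convention. There is no genuine obstacle here; the only point worth flagging is that the density is well-defined (the limit exists) because $M_k\mathbb{Z}$ is a periodic set with period $M_k$, in which any full period contributes exactly one element. Consequently the corollary is immediate, and its role in the paper is purely interpretive: it quantifies the filtering strength of Theorem~\ref{thm:MDO} by saying that only a fraction $1/M_k$ of all integer shifts $h$ even pass the congruence test, e.g.\ $1/2730\approx 0.0366\%$ for $k=13$.
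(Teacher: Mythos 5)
Your argument is correct and is exactly the standard density count the paper implicitly relies on (the paper gives no explicit proof of this corollary, treating it as immediate). Your explicit verification that $|M_k\mathbb{Z}\cap[-N,N]|=2\lfloor N/M_k\rfloor+1$ and that the limit is convention-independent is a clean way to make the obvious rigorous; nothing more is needed.
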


\begin{remark}[Arithmetic progressions in $k$ for fixed $h$]\label{rem:progressions}
Fix $h\ne0$ and a prime $p\nmid h$.
Then any $k$ with $(p-1)\mid(k-1)$ is \emph{forbidden} by Theorem~\ref{thm:MDO}.
In particular, if $3\nmid h$, all odd $k$ are excluded, since $2\mid(k-1)$ for odd $k$.
\end{remark}

\begin{remark}[Even exponents]\label{rem:even}
If $k$ is even, then $k-1$ is odd and the only prime $p$ with $(p-1)\mid(k-1)$ is $p=2$.
Hence $\mathcal{P}_k=\{2\}$ and $M_k=2$; i.e.\ MDO coincides with parity.
For clarity, our table below lists only \emph{odd} exponents.
\end{remark}

\subsection*{A compact table of $M_k$ for odd exponents}
For odd $k\in\{3,5,\dots,19\}$ we record $\mathcal{P}_k$, the squarefree modulus $M_k$,
the density $1/M_k$, and the combined lower bound on the slice size (see Proposition~\ref{prop:combination})
written as $2M_k/(k-1)$.

\medskip

\begin{center}
\begin{tabular}{r l r l l}
\toprule
$k$ & $\mathcal{P}_k=\{p:\ p-1\mid k-1\}$ & $M_k$ & density $1/M_k$ & $2M_k/(k-1)$ \\
\midrule
3  & $\{2,3\}$                 & $6$    & $1/6$     & $6$ \\
5  & $\{2,3,5\}$               & $30$   & $1/30$    & $15$ \\
7  & $\{2,3,7\}$               & $42$   & $1/42$    & $14$ \\
9  & $\{2,3,5\}$               & $30$   & $1/30$    & $60/8$ \\
11 & $\{2,3,11\}$              & $66$   & $1/66$    & $132/10$ \\
13 & $\{2,3,5,7,13\}$          & $2730$ & $1/2730$  & $5460/12=455$ \\
15 & $\{2,3\}$                 & $6$    & $1/6$     & $12/14$ \\
17 & $\{2,3,5,17\}$            & $510$  & $1/510$   & $1020/16$ \\
19 & $\{2,3,7,19\}$            & $798$  & $1/798$   & $1596/18$ \\
\bottomrule
\end{tabular}

\vspace{0.5ex}
\small\emph{Note.} Even $k$: $M_k=2$ (parity only), hence omitted.
\end{center}

\begin{remark}[Rapid growth for larger exponents]\label{rem:rapid-growth}
While the table lists $M_k$ for $k \le 19$, the value of $M_k$ grows very rapidly for exponents with highly composite $k-1$.
For instance, at $k=61$ we have $60=2^2\cdot3\cdot5$, leading to
\[
\mathcal P_{61}=\{2,3,5,7,11,13,31,61\},\qquad
M_{61}=2\cdot3\cdot5\cdot7\cdot11\cdot13\cdot31\cdot61=56{,}786{,}730.
\]
Thus any nonzero shifted solution must satisfy $|h|\ge M_{61}\approx 5.68\times 10^7$, i.e.\ the nearest admissible slice to $h=0$ is almost $57$ million away.
As a more dramatic illustration, for $k=841$ (so $k-1=840$) one finds many primes with $p-1\mid 840$, and even a partial product already exceeds $10^{21}$, pushing $|h|$ into the sextillion scale.
\end{remark}

\begin{remark}[Oscillation of admissible-shift densities]\label{rem:oscillation}
Recall from Corollary~\ref{cor:density} that for each exponent $k\ge2$ the condition $M_k\mid h$
selects a set of shifts of asymptotic density $d_k:=1/M_k$.
Since $M_k\ge2$ for all $k$ and $M_k=2$ for every even $k$ (Remark~\ref{rem:even}), we have
$d_k\le\tfrac12$ for all $k$ and $d_k=\tfrac12$ for infinitely many $k$, whence
\[
\limsup_{k\to\infty} d_k = \frac12.
\]
On the other hand, by choosing exponents with $k-1$ highly divisible (for example, taking $k-1$ to be a multiple of
$L_n=\mathrm{lcm}(1,2,\dots,n)$), we force $\mathcal{P}_k$ to contain all primes $p\le n+1$.
Thus
\[
M_k \;\ge\; \prod_{p\le n+1}p \;\longrightarrow\; \infty\quad\text{as }n\to\infty,
\]
so along such a sequence we have $d_k=1/M_k\to0$, and hence
\[
\liminf_{k\to\infty} d_k = 0.
\]
In the language of real analysis, the sequence $(d_k)$ therefore oscillates between values arbitrarily close to $1/2$ and values arbitrarily close to $0$.
\end{remark}

\section{Geometry of shifted slices and exclusion zones}
\label{sec:shifted-geometry}

Fix real $k>1$. For $S\ge0$ set $f_S(x)=x^k+(S-x)^k$ and
\[
V_{\min}(S):=\min_{x\in[0,S]} f_S(x)=2\Bigl(\frac{S}{2}\Bigr)^k.
\]

\begin{lemma}
$V_{\min}(S)$ is strictly increasing and strictly convex on $(0,\infty)$, with
\[
V_{\min}(S+h)-V_{\min}(S)\ \ge\ k\Bigl(\frac{S}{2}\Bigr)^{k-1}h\qquad(h>0).
\]
\end{lemma}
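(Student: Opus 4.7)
The plan is to reduce everything to a short one-variable calculation by first obtaining the explicit closed form of $V_{\min}(S)$. By Theorem~\ref{thm:central-uniqueness} applied to $f_S(x)=x^k+(S-x)^k$, this function is strictly convex and symmetric about $S/2$, so its minimum on $[0,S]$ is attained at $x=S/2$, yielding
\[
V_{\min}(S) = 2(S/2)^k = 2^{1-k}\,S^k.
\]

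With this formula in hand, all three claims follow from elementary calculus on $(0,\infty)$. Differentiating gives $V_{\min}'(S) = k\cdot 2^{1-k}S^{k-1} = k(S/2)^{k-1}$, which is strictly positive for $k>1$ and $S>0$, so $V_{\min}$ is strictly increasing. The second derivative $V_{\min}''(S) = k(k-1)\cdot 2^{1-k}S^{k-2}$ is likewise strictly positive for $k>1$ and $S>0$, giving strict convexity. For the inequality, I would simply invoke the tangent-line characterization of convexity: for a convex differentiable function on $(0,\infty)$ one has $V_{\min}(S+h)\ge V_{\min}(S)+V_{\min}'(S)\,h$ for all admissible $h$, and at $h>0$ this reads exactly
\[
V_{\min}(S+h)-V_{\min}(S) \ \ge\ k\,(S/2)^{k-1}\,h.
\]
Equivalently one can apply the mean value theorem to write $V_{\min}(S+h)-V_{\min}(S)=V_{\min}'(\xi)\,h$ for some $\xi\in(S,S+h)$ and then use strict monotonicity of $V_{\min}'$ (from convexity) to replace $V_{\min}'(\xi)$ by the smaller value $V_{\min}'(S)=k(S/2)^{k-1}$.

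There is no substantive obstacle here: once the closed form $V_{\min}(S)=2^{1-k}S^k$ is written down, the monotonicity, convexity, and slope bound are all instances of standard facts about convex power functions, and the tangent-line inequality immediately furnishes the stated linear lower bound.
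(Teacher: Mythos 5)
Your proof is correct and follows essentially the same route as the paper: differentiate the closed form $V_{\min}(S)=2(S/2)^k$ (which the paper records in the line defining $V_{\min}$), observe that $V_{\min}'>0$ and $V_{\min}''>0$, and obtain the linear lower bound via the mean value theorem together with monotonicity of $V_{\min}'$ (the tangent-line inequality you mention is an equivalent way to phrase that final step).
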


\begin{proof}
Direct differentiation gives
$V'_{\min}(S)=k(S/2)^{k-1}$ and $V''_{\min}(S)=\tfrac12k(k-1)(S/2)^{k-2}>0$.
Apply the mean value theorem and monotonicity of $V'_{\min}$.
\end{proof}

\begin{lemma}[Overlap bound for shifted slices]\label{lem:overlap}
Let $k>1$, and let $S>0$ and $h\in\mathbb{R}$ be such that $S+h>0$.
Suppose there exist real numbers $a,b,c,d\ge0$ with
\[
a+b=S,\quad c+d=S+h,\quad a^k+b^k=c^k+d^k.
\]
Then, after possibly interchanging $(a,b)$ with $(c,d)$, we may assume $h>0$ and $S\le S+h$, and in this case
\[
S \;\ge\; \frac{h}{2^{\frac{k-1}{k}}-1}.
\]
Equivalently, in symmetric form,
\[
\min\{S,S+h\} \;\ge\; \frac{|h|}{2^{\frac{k-1}{k}}-1}.
\]
\end{lemma}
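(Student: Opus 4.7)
The strategy is to turn the hypothesis into a \emph{range-overlap} condition: the common value $V:=a^k+b^k=c^k+d^k$ must lie simultaneously in the image of $f_S$ on $[0,S]$ \emph{and} in the image of $f_{S+h}$ on $[0,S+h]$, so these two intervals of values must intersect. From the resulting inequality, the desired bound on $\min\{S,S+h\}$ drops out at once.

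First I would handle the symmetry. Swapping $(a,b)\leftrightarrow(c,d)$ preserves all hypotheses and the symmetric form of the conclusion while sending $h\mapsto -h$, so without loss of generality I may take $h\ge 0$; the case $h=0$ is trivial (any $S\ge 0$ works), so assume $h>0$, in which case $\min\{S,S+h\}=S$. Next, I would identify the range of $f_T(x)=x^k+(T-x)^k$ on $[0,T]$ for any $T>0$. By Theorem~\ref{thm:central-uniqueness}, $f_T$ is strictly convex and symmetric about $T/2$, so it attains its minimum $V_{\min}(T)=2(T/2)^k=T^k/2^{k-1}$ at $x=T/2$ and its maximum $T^k$ at the endpoints $x\in\{0,T\}$. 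Hence, for any $a,b\ge 0$ with $a+b=T$,
\[
\frac{T^k}{2^{k-1}} \;\le\; a^k+b^k \;\le\; T^k.
\]

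Finally, I would apply the upper bound on the slice $S$ and the lower bound on the (larger) slice $S+h$ to the common value $V$, obtaining
\[
\frac{(S+h)^k}{2^{k-1}} \;\le\; V \;\le\; S^k,
\]
and then take $k$-th roots to get $S+h\le 2^{(k-1)/k}S$, equivalently $S\ge h/(2^{(k-1)/k}-1)$, which is the stated bound; the symmetric reformulation is just the WLOG step read backwards. There is essentially no hard step here, and no arithmetic obstacle: the only item worth double-checking is the identification of the extrema of $f_T$ on $[0,T]$, and this is already supplied by strict convexity and symmetry via Theorem~\ref{thm:central-uniqueness}. Thus the lemma reduces to combining the symmetry reduction, the extremal values of $f_T$, and a single range-overlap inequality.
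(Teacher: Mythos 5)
Your argument is exactly the paper's: reduce to $h>0$ by symmetry, identify the range of $x\mapsto x^k+(T-x)^k$ on $[0,T]$ as $[T^k/2^{k-1},\,T^k]$ via strict convexity and symmetry, and then force the range on the larger slice to overlap the range on the smaller one, which collapses to $2\bigl(\tfrac{S+h}{2}\bigr)^k\le S^k$ and hence $S\ge h/(2^{(k-1)/k}-1)$. No gaps; this is a faithful reconstruction of the paper's proof.
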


\begin{proof}
By symmetry we may assume $h>0$ and $S\le S+h$.
On the slice $a+b=S$ the values of $a^k+b^k$ form the interval
\[
I_k(S) = \Bigl[\,2\Bigl(\frac S2\Bigr)^k,\ S^k\Bigr],
\]
and on the slice $c+d=S+h$ the values of $c^k+d^k$ form
\[
I_k(S+h) = \Bigl[\,2\Bigl(\frac{S+h}2\Bigr)^k,\ (S+h)^k\Bigr].
\]
If $a^k+b^k=c^k+d^k$, then $I_k(S)\cap I_k(S+h)\neq\varnothing$, so in particular
the left endpoint of $I_k(S+h)$ cannot exceed the right endpoint of $I_k(S)$:
\[
2\Bigl(\frac{S+h}{2}\Bigr)^k \;\le\; S^k.
\]
Rewriting,
\[
2\cdot\frac{(S+h)^k}{2^k} \le S^k
\quad\Longleftrightarrow\quad
\frac{(S+h)^k}{2^{k-1}} \le S^k
\quad\Longleftrightarrow\quad
\frac{S+h}{2^{\frac{k-1}{k}}} \le S.
\]
Thus
\[
h \le \bigl(2^{\tfrac{k-1}{k}}-1\bigr)S,
\]
which is the claimed inequality. The symmetric form follows by interchanging $S$ and $S+h$ when $h<0$.
\end{proof}

\begin{corollary}[The case $k=13$]\label{cor:overlap-13}
Let $k=13$, and let $a,b,c,d\ge0$ satisfy $a^{13}+b^{13}=c^{13}+d^{13}$ with
$S=a+b$, $S+h=c+d$ and $h\ne0$.
Then
\[
\min\{S,S+h\}\ \ge\ C_{13}\,|h|,\qquad
C_{13}:=\frac{1}{2^{12/13}-1}\approx 1.115878.
\]
In particular, for integer solutions, Theorem~\ref{thm:MDO} implies that $M_{13}=2730$ divides $h$,
so $|h|\ge M_{13}$ when $h\ne0$, and hence
\[
\min\{S,S+h\}\ \ge\ C_{13}M_{13} > 3046,
\]
i.e.\ $\min\{S,S+h\}\ge 3047$ on any non-central slice with $k=13$.
\end{corollary}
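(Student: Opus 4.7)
The plan is a direct specialization of Lemma~\ref{lem:overlap} combined with the modular divisibility obstruction, so the proof is essentially a numerical unpacking. First I would invoke Lemma~\ref{lem:overlap} with $k=13$: the hypotheses $a^{13}+b^{13}=c^{13}+d^{13}$, $a+b=S$, $c+d=S+h$ with $a,b,c,d\ge 0$ fit exactly the setting there, and the symmetric form of the conclusion gives
\[
\min\{S,S+h\}\ \ge\ \frac{|h|}{2^{12/13}-1}\ =\ C_{13}|h|,
\]
so the constant $C_{13}:=1/(2^{12/13}-1)$ is just a relabelling of the Lemma's bound. The only computational content in the first assertion is verifying the decimal value: from $\ln 2\approx 0.693147$ one gets $(12/13)\ln 2\approx 0.639828$, hence $2^{12/13}\approx 1.896121$ and $C_{13}\approx 1/0.896121\approx 1.115878$, matching the stated constant.

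For the integer consequence I would invoke Theorem~\ref{thm:MDO} with $k=13$. The set $\mathcal{P}_{13}$ consists of those primes $p$ with $(p-1)\mid 12$, namely $p\in\{2,3,5,7,13\}$ (the table in Section~\ref{sec:modular} already records this), so
\[
M_{13}\ =\ 2\cdot 3\cdot 5\cdot 7\cdot 13\ =\ 2730,
\]
and any nonzero integer shift must satisfy $|h|\ge 2730$. Substituting into the previous inequality yields
\[
\min\{S,S+h\}\ \ge\ C_{13}\cdot 2730\ \approx\ 1.115878\cdot 2730\ \approx\ 3046.35.
\]
Since $a,b,c,d\in\mathbb{Z}_{\ge 0}$, both $S$ and $S+h$ are non-negative integers, hence so is $\min\{S,S+h\}$; rounding up gives $\min\{S,S+h\}\ge 3047$, as claimed.

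There is no genuine obstacle in this proof: both ingredients (the overlap bound and MDO) have already been established in full generality, and the corollary just substitutes $k=13$ and performs the arithmetic. The one point to double-check is that the numerical product $C_{13}\cdot M_{13}$ is strictly greater than $3046$ (not merely $\ge 3046$), since otherwise the integrality rounding would only give $3046$; a brief interval computation with $C_{13}>1.1158$ and $M_{13}=2730$ confirms $C_{13}\cdot 2730>1.1158\cdot 2730=3046.134>3046$, so rounding up to $3047$ is justified.
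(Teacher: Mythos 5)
Your proof is correct and is precisely the intended specialization of Lemma~\ref{lem:overlap} and Theorem~\ref{thm:MDO} to $k=13$, which is what the paper's presentation (stating this as a corollary with no separate proof) relies on. Your additional care in verifying $C_{13}\cdot 2730 > 3046$ strictly, so that integrality of $\min\{S,S+h\}$ forces $\ge 3047$, is exactly the right point to check.
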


\begin{theorem}[Exclusion zone principle]\label{thm:exclusion}
Let $k\ge2$ be integer, $S>0$, and $h\ne0$.
Suppose $a,b,c,d\ge0$ and
$a^k+b^k=c^k+d^k$, $a+b=S$, $c+d=S+h$.
Let $\delta=|a-b|/2$.
If $h>0$ (so $S$ is the smaller sum), then
\[
\delta^2\ \ge\ \frac{S h}{2(k-1)}.
\]
The case $h<0$ is symmetric for $(c,d)$.
\end{theorem}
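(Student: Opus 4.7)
The plan is to reduce the theorem to a scalar inequality for the one-variable function $H(\lambda) := (m+\lambda)^k + (m-\lambda)^k$, where $m := S/2$. Swapping $a$ and $b$ if necessary, write $a = m-\delta$, $b = m+\delta$ with $\delta \ge 0$. The easy input comes from the opposite slice: $y^k + (S+h-y)^k$ is minimized on $c+d = S+h$ at the midpoint, so $c^k+d^k \ge 2((S+h)/2)^k$, and the equality $a^k+b^k = c^k+d^k$ gives $H(\delta) \ge 2((S+h)/2)^k$.

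The heart of the proof is the matching upper bound
\[
H(\delta) \;\le\; 2\Bigl(m + \tfrac{(k-1)\delta^2}{2m}\Bigr)^{\!k}. \qquad (\ast)
\]
Granting $(\ast)$ and combining with the lower bound just established, strict monotonicity of $x \mapsto 2x^k$ on $x>0$ yields $(S+h)/2 \le m + (k-1)\delta^2/(2m)$; substituting $m = S/2$, this rearranges exactly to $\delta^2 \ge Sh/(2(k-1))$, the claim.

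I would prove $(\ast)$ by normalization. Setting $r = \delta/m \in [0,1]$, $(\ast)$ becomes $(1+r)^k + (1-r)^k \le 2\bigl(1+\tfrac{k-1}{2}r^2\bigr)^k$. Both sides are polynomials in $r^2$, so I would expand by the binomial theorem and verify the inequality coefficient by coefficient: the coefficient of $r^{2i}$ on the left is $2\binom{k}{2i}$, on the right $2\binom{k}{i}\bigl((k-1)/2\bigr)^i$, reducing $(\ast)$ to
\[
\binom{k}{2i} \;\le\; \binom{k}{i}\Bigl(\tfrac{k-1}{2}\Bigr)^{\!i} \qquad (0 \le i \le k/2).
\]
Writing $\binom{k}{2i}/\binom{k}{i} = \prod_{j=1}^{i}\frac{k-i-j+1}{i+j}$, this further reduces to the per-factor bound $2(k-i-j+1) \le (k-1)(i+j)$, which rearranges to $i + j \ge 2$, automatic for $i, j \ge 1$.

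The one delicate step is the coefficient inequality: it is already tight at $i = 1$, where $\binom{k}{2} = \binom{k}{1}\cdot(k-1)/2$, and this tightness is precisely what produces the optimal denominator $2(k-1)$ in the claimed bound. Cruder routes---bounding the excess $H(\delta) - 2m^k$ by a single mean-value argument, or applying only the bare Taylor-type lower estimate $H(\delta) \ge 2m^k + k(k-1)m^{k-2}\delta^2$---lose constant factors (typically a power of~$2$) and fail to reproduce $\delta^2 \ge Sh/(2(k-1))$.
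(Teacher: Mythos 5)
Your proof is correct and follows essentially the same route as the paper: both establish the key inequality $\tfrac12\bigl((X-\delta)^k+(X+\delta)^k\bigr)\le\bigl(X+\tfrac{k-1}{2X}\delta^2\bigr)^k$ by comparing binomial expansions coefficient-by-coefficient, reducing to $\binom{k}{2i}\le\binom{k}{i}\bigl(\tfrac{k-1}{2}\bigr)^i$, and then bounding a product of ratios factor-by-factor. Your pairing $\prod_{j=1}^i\frac{k-i-j+1}{i+j}$, giving the uniform per-factor bound $\frac{k-i-j+1}{i+j}\le\frac{k-1}{2}\iff i+j\ge2$, is a slightly cleaner bookkeeping than the paper's (which treats $m=1$ separately and uses the cruder chain $\frac{k-m-j}{2m-j}\le\frac{k-2}{3}\le\frac{k-1}{2}$ for $m\ge2$), but it is the same idea.
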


\begin{proof}
Let $X=S/2$. Since $a,b\ge 0$, we have $0\le\delta\le X$. We write $a=X-\delta, b=X+\delta$.
The necessary condition for a solution is $a^k+b^k \ge \min_{c+d=S+h} (c^k+d^k)$.
\[
(X-\delta)^k + (X+\delta)^k \ge 2\left(\frac{S+h}{2}\right)^k = 2(X+h/2)^k.
\]
We aim to show that this implies $(k-1)\delta^2 \ge Xh$.

We will prove the following inequality, which shows that the $k$th power sum is bounded above by the $k$th power of its quadratic approximation at the mean $X$:
\begin{equation}\label{eq:power_mean_inequality}
\frac{(X-\delta)^k+(X+\delta)^k}{2} \le \left(X + \frac{k-1}{2X}\delta^2\right)^k.
\end{equation}
If this inequality holds, then combining it with the necessary condition gives:
\[
2(X+h/2)^k \le (X-\delta)^k + (X+\delta)^k \le 2\left(X + \frac{k-1}{2X}\delta^2\right)^k.
\]
Since the function $t\mapsto t^k$ is strictly increasing for $t\ge 0$, taking the $k$th root yields:
\[
X+h/2 \le X + \frac{k-1}{2X}\delta^2,
\]
which simplifies to $Xh \le (k-1)\delta^2$, or $\delta^2 \ge \frac{Xh}{k-1} = \frac{Sh}{2(k-1)}$, as desired.

It remains to prove~\eqref{eq:power_mean_inequality}. We compare the binomial expansions of both sides.
The Left Hand Side (LHS) is:
\begin{align*}
\text{LHS} &= \frac{1}{2} \sum_{j=0}^k \binom{k}{j} X^{k-j} ((-\delta)^j + \delta^j) \\
&= \sum_{m=0}^{\lfloor k/2 \rfloor} \binom{k}{2m} X^{k-2m} \delta^{2m}.
\end{align*}
The Right Hand Side (RHS) is:
\begin{align*}
\text{RHS} &= \sum_{m=0}^k \binom{k}{m} X^{k-m} \left(\frac{k-1}{2X}\delta^2\right)^m \\
&= \sum_{m=0}^k \binom{k}{m} \left(\frac{k-1}{2}\right)^m X^{k-2m} \delta^{2m}.
\end{align*}
We will show that the inequality holds term by term, i.e., the coefficient of $X^{k-2m}\delta^{2m}$ on the LHS is less than or equal to the corresponding coefficient on the RHS for all $m\ge 0$. We need to verify:
\begin{equation}\label{eq:coeff_comparison}
\binom{k}{2m} \le \binom{k}{m} \left(\frac{k-1}{2}\right)^m, \quad \text{for } 1\le m \le \lfloor k/2 \rfloor.
\end{equation}
For $m=0$, both sides are 1. For $m > \lfloor k/2 \rfloor$, the LHS coefficient is 0, while RHS is non-negative.

\smallskip
Case $m=1$. LHS is $\binom{k}{2} = \frac{k(k-1)}{2}$. RHS is $\binom{k}{1}\frac{k-1}{2} = \frac{k(k-1)}{2}$. They are equal.

\smallskip
Case $m\ge 2$. We rewrite \eqref{eq:coeff_comparison} (assuming $2m\le k$):
\[
\frac{k!}{(2m)!(k-2m)!} \le \frac{k!}{m!(k-m)!} \left(\frac{k-1}{2}\right)^m
\]
\[
\frac{m!(k-m)!}{(2m)!(k-2m)!} \le \left(\frac{k-1}{2}\right)^m.
\]
The LHS is a product of $m$ factors:
\[
\text{LHS} = \frac{(k-m)(k-m-1)\cdots(k-2m+1)}{(2m)(2m-1)\cdots(m+1)} = \prod_{j=0}^{m-1} \frac{k-m-j}{2m-j}.
\]

Since $m\ge 2$ and $0\le j\le m-1$, we have $k-m-j\le k-2$ and $2m-j\ge m+1\ge 3$.
Therefore
\[
\frac{k-m-j}{2m-j}\le \frac{k-2}{3}\le \frac{k-1}{2},
\]
so each factor in the product is $\le (k-1)/2$. Hence
\[
\prod_{j=0}^{m-1} \frac{k-m-j}{2m-j}\le \left(\frac{k-1}{2}\right)^m,
\]
which proves \eqref{eq:coeff_comparison}.

We have shown that the expansions satisfy the inequality term by term, with equality for the $m=0$ and $m=1$ terms. This proves~\eqref{eq:power_mean_inequality} and completes the proof of the theorem.
\end{proof}

\begin{proposition}[Combination with MDO: lower bound on slice size]\label{prop:combination}
Under the hypotheses of Theorem~\ref{thm:exclusion}, one has
\[
\min\{S,S+h\}\ \ge\ \frac{2|h|}{k-1}.
\]
If moreover $h\ne0$ and $k\ge2$ is integer, then by Theorem~\ref{thm:MDO} $M_k\mid h$, hence
\[
\min\{S,S+h\}\ \ge\ \frac{2M_k}{k-1}.
\]
\end{proposition}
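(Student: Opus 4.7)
The plan is to combine the exclusion-zone inequality of Theorem~\ref{thm:exclusion} with the elementary geometric bound $\delta \le S/2$ on the smaller-sum slice, and then to upgrade the resulting estimate to one involving $M_k$ by invoking the divisibility statement of Theorem~\ref{thm:MDO}.

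First I would reduce to the case $h>0$. If $h<0$, the roles of $(a,b)$ and $(c,d)$ interchange: the smaller-sum slice becomes $c+d=S+h$, and Theorem~\ref{thm:exclusion} applies there with positive shift $-h$, yielding the symmetric conclusion. Assume then $h>0$, so that $\min\{S,S+h\}=S$. Theorem~\ref{thm:exclusion} gives
$$\delta^{2}\ \ge\ \frac{Sh}{2(k-1)},\qquad \delta=\tfrac12|a-b|.$$

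The key additional ingredient is the nonnegativity constraint on the slice $a+b=S$: since $a,b\ge 0$, we have $|a-b|\le a+b=S$, hence $\delta\le S/2$ and $\delta^{2}\le S^{2}/4$. Chaining with the exclusion-zone estimate and dividing by $S>0$ yields $S\ge 2h/(k-1)=2|h|/(k-1)$, which is the first claimed inequality. For the second, if additionally $h\ne0$ and $k\ge2$ is an integer, Theorem~\ref{thm:MDO} forces $M_{k}\mid h$, so $|h|\ge M_{k}$; substitution gives $\min\{S,S+h\}\ge 2M_{k}/(k-1)$.

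There is essentially no hard step: the argument is a one-line chain of inequalities built from two ingredients already in place. The only conceptual point worth emphasising is that the nonnegativity hypothesis $a,b,c,d\ge 0$ is \emph{essential}: without it, $\delta$ is unbounded in terms of $S$, and Theorem~\ref{thm:exclusion} by itself would not translate into a lower bound on the slice size. The proposition therefore genuinely packages \emph{three} independent inputs---the exclusion zone, nonnegativity, and MDO---into one clean inequality.
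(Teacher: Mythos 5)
Your proof is correct and follows essentially the same route as the paper: combine the exclusion-zone inequality $\delta^2\ge Sh/(2(k-1))$ with the elementary bound $\delta\le S/2$ on the smaller-sum slice, handle $h<0$ by symmetry, and invoke Theorem~\ref{thm:MDO} to get $|h|\ge M_k$. The only (welcome) addition is your explicit remark that nonnegativity of $a,b$ is what makes $\delta\le S/2$ available, which the paper uses silently.
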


\begin{proof}
If $h>0$, then $\delta\le S/2$ and Theorem~\ref{thm:exclusion} gives
$S^2/4\ge \delta^2\ge Sh/(2(k-1))$, hence $S\ge 2h/(k-1)$.
If $h<0$, apply Theorem~\ref{thm:exclusion} to the pair $(c,d)$ with shift $-h>0$:
then $S+h \ge 2|h|/(k-1)$.
Combining, $\min\{S,S+h\}\ge 2|h|/(k-1)$.
The second bound follows since $M_k\mid h$ implies $|h|\ge M_k$ when $h\ne0$.
\end{proof}

\begin{remark}[Relative strength of bounds]
It is important to note that the overlap bound (Lemma~\ref{lem:overlap}) is strictly stronger than the combined bound (Proposition~\ref{prop:combination}) for all $k\ge 2$. That is,
\[
C_k := \frac{1}{2^{\frac{k-1}{k}}-1} > \frac{2}{k-1}.
\]
This inequality is equivalent to $k-1 > 2(2^{(k-1)/k}-1)$, or $k+1 > 4\cdot 2^{-1/k}$.
For $k=2$, $3 > 4\cdot 2^{-1/2} \approx 2.828$. For $k\ge 3$, $k+1\ge 4$, while $4\cdot 2^{-1/k} < 4$.
While the exclusion zone principle (Theorem~\ref{thm:exclusion}) provides insight into the local geometry near the center of the slice, the global constraint from the overlap of ranges (Lemma~\ref{lem:overlap}) is dominant.
\end{remark}

\section{Asymptotic dominance on fixed slices}
\label{sec:asymptotic-dominance}

We record a simple growth obstruction for large $k$ on a fixed slice. Let $\log$ be natural.

\begin{lemma}\label{lem:largest-gap}
If $M\ge2$ and $k> M\log 2$, then $M^k-(M-1)^k>(M-1)^k$, hence $M^k>2(M-1)^k$.
\end{lemma}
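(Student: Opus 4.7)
The two conclusions are equivalent by moving $(M-1)^k$ across, so I will target the cleaner form $M^k>2(M-1)^k$. Dividing through by $(M-1)^k>0$ and taking logarithms (noting $M/(M-1)>1$ since $M\ge2$, so the log is positive), this is in turn equivalent to
\[
k\,\log\!\Bigl(\tfrac{M}{M-1}\Bigr) \;>\; \log 2.
\]

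The plan is then a two-step reduction. Because $\log(M/(M-1))>0$, multiplying the hypothesis $k>M\log 2$ by $\log(M/(M-1))$ gives $k\log(M/(M-1))>M\log 2\cdot\log(M/(M-1))$, so it suffices to establish the universal lower bound
\[
M\,\log\!\Bigl(\tfrac{M}{M-1}\Bigr)\;\ge\;1\qquad(M\ge2),
\]
after which the target inequality drops out. This is a standard logarithmic estimate — equivalent to the classical fact that the sequence $(1+1/n)^{n+1}$ decreases to $e$ (take $n=M-1$, which even gives the stronger bound $\ge\log e=1$ with strict inequality for finite $M$) — and I would prove it directly via the integral representation
\[
\log\!\Bigl(\tfrac{M}{M-1}\Bigr) \;=\; \int_{M-1}^{M}\frac{dt}{t} \;\ge\; \int_{M-1}^{M}\frac{dt}{M} \;=\; \frac{1}{M},
\]
using $t\le M$ throughout the interval of integration, and then multiplying by $M$.

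There is no substantive obstacle here, but two small points deserve care. First, the hypothesis $k>M\log 2$ is strict while the auxiliary bound $M\log(M/(M-1))\ge 1$ is not, so one should verify that strict inequality is correctly transported through the chain — it comes from the strict hypothesis on $k$ together with the positivity of $\log(M/(M-1))$, not from the integral estimate. Second, since $k$ is permitted to be real rather than integer (as used elsewhere in the paper), a binomial-expansion approach is unavailable, which is precisely why the logarithmic/integral formulation is the natural tool.
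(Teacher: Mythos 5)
Your proof is correct and follows essentially the same route as the paper's: both reduce the lemma to the key estimate $\log\bigl(M/(M-1)\bigr)\ge 1/M$ and then conclude by exponentiating (or, equivalently, by multiplying through the hypothesis on $k$). The only cosmetic difference is that the paper invokes the standard inequality $\log(1+x)\ge x/(1+x)$ at $x=1/(M-1)$, whereas you derive the identical bound directly from the integral representation of the logarithm, which is simply one proof of that inequality.
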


\begin{proof}
$\displaystyle \frac{M^k}{(M-1)^k}=\bigl(1+\frac{1}{M-1}\bigr)^k
=\exp\!\bigl(k\log(1+\tfrac{1}{M-1})\bigr)\ge \exp(k/M)$
since $\log(1+x)\ge x/(1+x)$ with $x=1/(M-1)$.
If $k/M>\log2$ the ratio exceeds $2$.
\end{proof}

\begin{theorem}[Asymptotic dominance on a fixed slice]\label{thm:asymptotic}
Let $a,b,c,d\in\mathbb{Z}_{\ge0}$, $h\ne0$, and $k\ge1$ satisfy
$a^k+b^k=c^k+d^k$ and $(c+d)-(a+b)=h$.
Let $S_1=a+b$, $S_2=c+d$, $S_0=\max\{S_1,S_2\}$, and $M=\max\{a,b,c,d\}$.
Then $M\ge2$ and
\[
k\ \le\ M\log 2\ \le\ S_0\log 2.
\]
In particular, fixing $S$ and $h$, there are no solutions with $a+b=S$, $c+d=S+h$ and
$k> K_0(S,h):=\max\{S,S+h\}\log 2$.
\end{theorem}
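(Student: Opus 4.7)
The plan is to prove the contrapositive: assuming $k > M\log 2$, show that the equation $a^k+b^k=c^k+d^k$ cannot hold. The main tool is Lemma~\ref{lem:largest-gap}, which gives $M^k > 2(M-1)^k$ whenever $M\ge 2$ and $k > M\log 2$. The strategy is to arrange the four variables so that one side of the equation is bounded below by $M^k$ and the other side is bounded above by $2(M-1)^k$, and then invoke the lemma to get a contradiction.

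First I would dispose of the degenerate range $M\le 1$: then every variable lies in $\{0,1\}$, so $x^k = x$ for each of $a,b,c,d$, and the equation of $k$th-power sums collapses to $a+b=c+d$, forcing $h=0$ and contradicting $h\ne0$; hence $M\ge 2$. Next, by exchanging $(a,b)\leftrightarrow(c,d)$ if necessary (which replaces $h$ by $-h$ but preserves $h\ne0$) and by relabeling within each pair, I may assume $M=c$. A short case analysis then shows that $M$ cannot also be attained on the left side: if $a=M$ then $a=c$, so the equation reduces to $b^k=d^k$, whence $b=d$ (both nonnegative integers and $k>0$), giving $a+b=c+d$ and $h=0$, contradiction; the case $b=M$ is symmetric. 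Therefore both $a,b\le M-1$.

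The finish is then immediate. On the one hand $a^k+b^k\le 2(M-1)^k$, while on the other $c^k+d^k\ge c^k=M^k$. Applying Lemma~\ref{lem:largest-gap} under the standing assumption $k>M\log 2$ yields
\[
c^k+d^k \;\ge\; M^k \;>\; 2(M-1)^k \;\ge\; a^k+b^k,
\]
contradicting the equality. Hence $k\le M\log 2$, and since $M=c\le c+d=S_2\le S_0$ we conclude $k\le M\log 2\le S_0\log 2$. The ``in particular'' clause is just the instantiation $S_0=\max\{S,S+h\}$ on a fixed slice.

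I do not expect any step to be a real obstacle; the only subtle point is noticing that if the maximum is tied between the two sides, then cancelling $a^k=c^k$ (or $b^k=c^k$) forces $h=0$, which is what pins down the case $a,b\le M-1$ used in the final estimate.
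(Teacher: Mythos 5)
Your proof is correct and follows essentially the same route as the paper: rule out $M\le 1$ using $h\ne 0$, observe that the maximal base cannot occur on both sides (else cancelling forces $b=d$ and $h=0$), and then compare $M^k$ against $2(M-1)^k$ via Lemma~\ref{lem:largest-gap}. Your inequality chain $M^k\le c^k+d^k=a^k+b^k\le 2(M-1)^k$ is a marginally cleaner packaging of the paper's $M^k-(M-1)^k\le|d^k-b^k|\le(M-1)^k$, but the substance is identical.
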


\begin{proof}
Since $h\ne0$, not all of $a,b,c,d$ lie in $\{0,1\}$; hence $M\ge2$.
If the maximal base $M$ appeared on both sides, then after reordering we may assume $a=c=M$.
Cancelling $M^k$ gives $b^k=d^k$, hence $b=d$, and therefore $h=(c+d)-(a+b)=0$, a contradiction.
Thus for $h\ne0$ the value $M$ appears on exactly one side; say $M^k+b^k=c^k+d^k$ with $b,c,d\le M-1$.
Then $M^k-(M-1)^k\le |d^k-b^k|\le (M-1)^k$, which fails when $k>M\log 2$ by Lemma~\ref{lem:largest-gap}.
Thus $k\le M\log 2\le S_0\log 2$.
\end{proof}

\noindent\emph{Link to MDO.} Thus, for a fixed slice, $k$ is bounded from above (Theorem~\ref{thm:asymptotic}), and for many $k$ within that range, the slice is forbidden by MDO (Theorem~\ref{thm:MDO}).

\begin{remark}
The bound is crude but explicit and sufficient for our local purposes; typically $M\approx S_0/2$,
suggesting a heuristic threshold near $(S_0/2)\log 2$.
\end{remark}

\section{Concluding remarks}\label{sec:conclusion}

We do not address the global open problem of non-trivial solutions to
$a^k+b^k=c^k+d^k$ for $k\ge5$ without linear constraints.
Our contribution is structural in the sliced setting~\eqref{eq:shift}.

On the central slice $a+b=c+d$ we have complete uniqueness (Theorem~\ref{thm:central-uniqueness}) and
quantitative separation (Theorem~\ref{thm:quantitative}): distinct unordered pairs $\{a,b\}$ along
$a+b=S$ produce values of $a^k+b^k$ separated by $\gg_k S^{k-2}$.
This gives a simple local instance of sparsity heuristics for equal sums of two $k$th powers.

For shifted slices $h\neq0$, MDO (Theorem~\ref{thm:MDO}) produces a \emph{squarefree} modulus $M_k$
with $M_k\mid h$, maximal among such moduli by Lemma~\ref{lem:maximality}. This contains parity
as a special case (Corollary~\ref{cor:parity}), yields $k\mid h$ for prime $k$, and gives
density $1/M_k$ of admissible shifts (Corollary~\ref{cor:density}).
For many odd $k$ (e.g.\ $k=13$) this drastically thins the set of feasible $h$.
Combined with the exclusion zone (Theorem~\ref{thm:exclusion}) we obtain the lower bound
$\min\{S,S+h\} \ge 2M_k/(k-1)$ (Proposition~\ref{prop:combination}), which is an explicit structural constraint on the
slice size. In addition, the overlap bound of Lemma~\ref{lem:overlap} shows that the smaller of the two slice sums must satisfy
$\min\{S,S+h\}\gg_k |h|$, giving a global geometric restriction based solely on the ranges of $a^k+b^k$ on the two slices.
Finally, along a fixed slice $(S,h)$, the dominance bound $k\le \max\{S,S+h\}\log 2$
(Theorem~\ref{thm:asymptotic}) shows that only finitely many exponents can occur.

From a broader perspective, one may compare these elementary constraints with global spacing heuristics
and conjectural arithmetic--geometric uniformity (e.g.\ Bombieri--Lang), cf.\ \cite{Browning2002,CaporasoHarrisMazur1997,Faltings1983,Lang1983}.

\end{document}